\newcommand\blfootnote[1]{%
  \begingroup
  \renewcommand\thefootnote{}\footnote{#1}%
  \addtocounter{footnote}{-1}%
  \endgroup
}
\newcommand{\mres}{\mathbin{\vrule height 1.6ex depth 0pt width
0.13ex\vrule height 0.13ex depth 0pt width 1.3ex}}
\DeclareMathOperator{\diam}{diam}
\theoremstyle{plain}                   
\newtheorem{theorem}{Theorem}
\newtheorem{ex}[theorem]{Example}
\theoremstyle{remark}
\newtheorem*{rem}{Remark}
\newcommand{\R}{\mathbb{R}}
\begin{document}
\title {A comparison of Euclidean and Heisenberg Hausdorff measures}
\author{Pertti Mattila and Laura Venieri}

\maketitle 

\begin{abstract} 
We prove some geometric properties of sets in the first Heisenberg group whose Heisenberg Hausdorff dimension is the minimal or maximal possible in relation to their Euclidean one and the corresponding Hausdorff measures are positive and finite. In the first case we show that these sets must be in a sense horizontal and in the second case vertical. We show the sharpness of our results with some examples.

\end{abstract}

\blfootnote{\textit{Key words.} Hausdorff measure, Heisenberg group, Hausdorff dimension.\\
\textit{Mathematics Subject Classification}. 28A75.\\
Both authors are supported by the Academy of Finland through the Finnish Center of Excellence in Analysis and Dynamics Research. L.V. is supported by the Vilho, Yrj\"o ja Kalle V\"ais\"al\"a Foundation.}

\section{Introduction}

Let $\mathcal{H}^s_E$ denote the Euclidean Hausdorff measure in the first Heisenberg group $\mathbb{H}^1$ and let $\mathcal{H}^s_H$ denote the Hausdorff measure with respect to some homogeneous metric. Let $\dim_E$ and $\dim_H$ denote the corresponding Hausdorff dimensions. Generally, for a set $A \subset \mathbb{H}^1$, $\dim_E A$ and $\dim_H A$ can be different. Balogh, Rickly and Serra Cassano in \cite{BRSC} compared them, proving what follows. For $s\geq 0$ let
$$\beta_-(s)=\max\{s,2s-2\}, \beta_+(s)=\min\{2s,s+1\}.$$
Then for any $A\subset\mathbb{H}^1$,
$$\beta_-(\dim_EA) \leq \dim_HA \leq \beta_+(\dim_EA).$$
Moreover, they also showed the sharpness of some of these inequalities, which was then completed by Balogh and Tyson in \cite{BT}: for any $0<s < 3$ they constructed compact subsets $F_1$ and $F_2$ of $\mathbb{H}^1$ such that $\mathcal H^s_E(F_1)$ and $\mathcal H^{\beta_-(s)}_H(F_1)$ are positive and finite and $\mathcal H^s_E(F_2)$ and $\mathcal H^{\beta_+(s)}_H(F_2)$ are positive and finite.  The example $F_1$, for $0<s\leq 2$, is in a sense horizontal and $F_2$ is in a sense vertical. In this paper we show that this must be so. We prove in Theorem \ref{thm1} that for any set $A\subset\mathbb{H}^1$ and $0<s\le2$, if both $\mathcal H^s_E(A)$ and $\mathcal H^{s}_H(A)$ are positive and finite, then in some arbitrarily small neighbourhoods around its typical points $p$, most of $A$ lies close to the horizontal plane through $p$. We shall construct an example (see Example \ref{ex1}) to show that this need not hold for all small neighbourhoods and another example (see Example \ref{ex3}) to show that this does not hold when $s>2$ and both $\mathcal{H}^s_E(A)$ and $\mathcal{H}^{2s-2}_H(A)$ are positive and finite.  Corresponding to the second case we show in Theorems \ref{sgreat1} and \ref{ssmall1} that if both $\mathcal H^s_E(A)$ and $\mathcal H^{\beta_+(s)}_H(A)$ are positive and finite, then in some arbitrarily small neighbourhoods around its typical points $p$, a large part of $A$ lies off the horizontal plane through $p$. 

In \cite{BTW} Balogh, Tyson and Warhurst solved the dimension comparison problem in general Carnot groups, but here we restrict to the first Heisenberg group.

\section{Preliminaries}
In a metric space $X$ for $0 < s<\infty$ the $s$-dimensional Hausdorff measure of $A\subset X$ is defined by
$$\mathcal{H}^s(A)=\lim_{\delta\to 0}\mathcal{H}^s_{\delta}(A),$$
where
$$\mathcal{H}^s_{\delta}(A)=\inf\{\sum_{i=1}^{\infty}\diam(B_i)^s: A\subset\bigcup_{i=1}^{\infty}B_i, \diam (B_i)<\delta\}.$$ 
The Hausdorff dimension of $A$ is
$$\dim A = \inf\{s:\mathcal{H}^s(A)=0\}.$$

Let $B(p,r)$ be the closed ball with centre $p\in X$ and radius $r$. We have the basic upper density theorem for Hausdorff measures, see, e,g., \cite{F}, 2.10.19. 
\begin{theorem}\label{upperdens} Let $A\subset X$ be $\mathcal{H}^s$ measurable with $\mathcal{H}^s(A)<\infty$. Then for $\mathcal{H}^s$ almost all $p\in A$,
\begin{equation*}
2^{-s} \leq \limsup_{r\to 0}\frac{\mathcal{H}^s(A\cap B(p,r))}{(2r)^s} \leq 1
\end{equation*}
and for $\mathcal{H}^s$ almost all $p\in X\setminus A$,
\begin{equation*}
\lim_{r\to 0}\frac{\mathcal{H}^s(A\cap B(p,r))}{(2r)^s} = 0.
\end{equation*}
\end{theorem}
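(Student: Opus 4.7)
The plan is to apply the general density comparison machinery for Borel regular measures to the restriction $\mu := \mathcal{H}^s \mres A$. Since $A$ is $\mathcal{H}^s$-measurable with $\mathcal{H}^s(A) < \infty$, $\mu$ is a finite Borel regular measure on $X$ and $\mu(E) = \mathcal{H}^s(A \cap E)$ for every Borel set $E$. Writing $\bar{D}(p) := \limsup_{r \to 0} \mu(B(p,r))/(2r)^s$ for the upper $s$-density of $\mu$, I would deduce the theorem from the following two standard estimates valid for any Borel set $B \subset X$: first, if $\bar{D}(p) \geq t$ on $B$ then $t\,\mathcal{H}^s(B) \leq \mu(B)$; second, if $\bar{D}(p) \leq t$ on $B$ then $\mu(B) \leq 2^s t\,\mathcal{H}^s(B)$.

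Granting these, all three conclusions follow by level-set arguments. For the upper density bound, define $B_t = \{p \in A : \bar{D}(p) > t\}$ for $t > 1$; the first estimate gives $t\,\mathcal{H}^s(B_t) \leq \mu(B_t) = \mathcal{H}^s(B_t) < \infty$, forcing $\mathcal{H}^s(B_t) = 0$. Letting $t \downarrow 1$ along a sequence yields $\bar{D} \leq 1$ for $\mathcal{H}^s$-almost every $p \in A$. For the lower density bound, apply the second estimate with $B_t' = \{p \in A : \bar{D}(p) < t\}$ and $t < 2^{-s}$ to get $\mathcal{H}^s(B_t') = \mu(B_t') \leq 2^s t\,\mathcal{H}^s(B_t')$; since $2^s t < 1$ this forces $\mathcal{H}^s(B_t') = 0$, and $t \uparrow 2^{-s}$ gives $\bar{D} \geq 2^{-s}$ a.e.\ on $A$. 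For the statement on $X \setminus A$, the set $C_t = \{p \in X \setminus A : \bar{D}(p) > t\}$ is disjoint from $A$, so the first estimate gives $t\,\mathcal{H}^s(C_t) \leq \mu(C_t) = \mathcal{H}^s(A \cap C_t) = 0$; hence $\bar{D} = 0$ for $\mathcal{H}^s$-almost every $p \in X \setminus A$, which forces the full limit to equal $0$.

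The proof of the second estimate is a routine covering exchange: take an $\mathcal{H}^s_\delta$-efficient cover $\{E_i\}$ of $B$, discard those $E_i$ not meeting $B$, choose $p_i \in E_i \cap B$ so that $E_i \subset B(p_i, \diam E_i)$, apply the density bound $\mu(B(p_i, \diam E_i)) \leq (t + \varepsilon)(2\diam E_i)^s$, and sum; the factor $2^s$ reflects the fact that the enclosing ball has diameter at most $2\diam E_i$. The \emph{main obstacle} is the first estimate with the sharp constant $1$: a general metric space has no Vitali covering theorem, and the bare $5r$-covering lemma would produce a useless factor of $5^s$ on the right. The standard remedy, as carried out in Federer \cite{F} 2.10.19, exploits the Borel regularity of $\mu$: for $\varepsilon > 0$ choose an open $U \supset B$ with $\mu(U) < \mu(B) + \varepsilon$, consider the fine family of balls $B(p,r) \subset U$ on which $\mu(B(p,r)) \geq (t-\varepsilon)(2r)^s$, and extract from it an $\mathcal{H}^s$-admissible cover of $B$ in a manner that retains essential $\mu$-disjointness inside $U$, so that the diameters sum to at most $\mu(U)/(t-\varepsilon)$. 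Letting $\delta, \varepsilon \to 0$ then delivers the sharp inequality.
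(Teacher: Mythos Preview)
The paper does not prove Theorem~\ref{upperdens} at all; it simply records the statement and cites Federer~\cite{F}, 2.10.19. Your proposal is a correct outline of exactly that standard argument, and you even defer the sharp-constant covering step to the same reference, so there is nothing to compare: you have supplied what the paper deliberately omits.

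One small technical remark on your sketch of the second estimate: the hypothesis $\bar D(p)\le t$ is only a $\limsup$ bound, so the inequality $\mu(B(p_i,\diam E_i))\le (t+\varepsilon)(2\diam E_i)^s$ is not automatic for \emph{every} $\diam E_i<\delta$; one first stratifies $B$ into the sets $B_n=\{p\in B:\mu(B(p,r))<(t+\varepsilon)(2r)^s\text{ for all }0<r<1/n\}$ and runs the covering exchange on each $B_n$. This is routine and does not affect the validity of your plan.
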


Let $\mathbb{H}^1$ be the first Heisenberg group. It can be identified as $\R^3$ with the non-Abelian group operation
$$p\cdot p' = (x+x',y+y',t+t'+2(x'y-xy')),$$
for $p=(x,y,t), p'=(x',y',t')$, and with the metric 
$$d_H(p,p')=\left(((x-x')^2+(y-y')^2)^2+(t-t'-2(x'y-xy'))^2\right)^{1/4}.$$
In addition to $d_H$ we shall also use the Euclidean metric, which we denote by $d_E$. Then for $0<R<\infty$ there exists a constant $c_R>0$ such that for every $p,p' \in B_E(0,R)$,
\begin{equation}\label{eq2}
\frac{1}{c_R}d_E(p,p') \le  d_H(p,p') \le c_R d_E(p,p')^{1/2}.
\end{equation}
The closed ball $B(p,r)$ is denoted by $B_H(p,r)$ when the metric is $d_H$ and by $B_E(p,r)$ when the metric is $d_E$. The $s$-dimensional Hausdorff measures and dimensions with respect to $d_H$ and $d_E$ are denoted by $\mathcal{H}_H^s, \mathcal{H}_E^s, \dim_H$ and $\dim_E$. In place of $d_H$ we could use any homogeneous metric on $\mathbb{H}^1$, that is, any left invariant metric $d$ satisfying  $d((\delta x, \delta y, \delta^2t),(\delta x', \delta y', \delta^2t'))= \delta d((x, y, t),(x', y', t'))$. By \cite{BLU}, 5.1.5, they are all equivalent.

Recall the definitions of $\beta_-(s)$ and  $\beta_+(s)$ from the introduction. Then by \cite{BTW}, Proposition 3.1, for any positive number $R$ there exists a constant $C_R$ such that for $A\subset B_E(0,R)$ and for $0<s<3$,
\begin{equation}\label{eqcomp}
\mathcal{H}^{\beta_+(s)}_H(A)/C_R\leq\mathcal{H}^s_E(A)\leq C_R\mathcal{H}^{\beta_-(s)}_H(A).
\end{equation}

Let $V(p)$ denote the horizontal plane passing through $p=(x_0,y_0,t_0)\in\mathbb{H}^1$. This is the set of points $q=(x,y,t)$ such that 
\begin{equation}\label{eq4}
t_0-t-2(xy_0-y x_0)=0.
\end{equation}
The Euclidean distance of a point $q=(x,y,t)$ to the plane $V(p)$ is
\begin{equation}\label{eq6}
d_E(q,V(p))= \frac{|t_0-t-2(xy_0-yx_0)|}{\sqrt{1+4(x_0^2+y_0^2)}}.
\end{equation}

We let $A(\delta)$ denote the closed $\delta$ neighbourhood of $A\subset\mathbb{H}^1$ in the Euclidean metric.
Observe that $B_H(p,r)$ looks like $V(p)(r^2)\cap B_E(p,r)$, more precisely, for $p$ as above with $x_0^2+y_0^2 \le R^2$,
\begin{equation}\label{eq5}
V(p)\left(\frac{r^2}{\sqrt{2(1+4R^2)}}\right)\cap B_E\left(p,\frac{r}{2}\right)\subset B_H(p,r) \subset V(p)(r^2)\cap B_E(p,r). 
\end{equation}

The restriction of a measure $\mu$ to a set $A\subset X$ is denoted by $\mu\mres A; \mu\mres A(B)=\mu(A\cap B)$. 

\section{The theorems}\label{The theorems}
 
\begin{theorem}\label{thm1}
Let $0< s \le 2$ and let $A \subset \mathbb{H}^1$ be such that $\mathcal{H}^s_H(A) < \infty$. Then  for $\mathcal{H}^s_E$ almost every $p \in A$ there exists $0<\epsilon<1$ such that
\begin{equation*}
\liminf_{r \rightarrow 0} \frac{\mathcal{H}^s_E(A \cap B_E(p,r) \setminus V(p)(r^{1+\epsilon}))}{r^s}=0.
\end{equation*}
\end{theorem}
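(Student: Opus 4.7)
My plan is to combine the upper-density theorem for $\mathcal{H}^s_H$ on $A$ with a geometric inclusion relating the exceptional set $A\cap B_E(p,r)\setminus V(p)(r^{1+\epsilon})$ to a Heisenberg annulus around $p$, and then to choose the scale so that this annulus has small Heisenberg measure.

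First I would reduce to bounded $A$; since $\beta_-(s)=s$ for $0<s\le 2$, (\ref{eqcomp}) gives $\mathcal{H}^s_E(A)\le C_R\mathcal{H}^s_H(A)<\infty$, so $\mathcal{H}^s_E\mres A\ll\mathcal{H}^s_H\mres A$. Applying Theorem~\ref{upperdens} to $\mathcal{H}^s_H$ yields, at $\mathcal{H}^s_H$- and hence $\mathcal{H}^s_E$-a.e.\ $p\in A$, the upper-density bound $\limsup_{\rho\to 0}\mathcal{H}^s_H(A\cap B_H(p,\rho))/(2\rho)^s\le 1$. The geometric core is the inclusion, valid for every $p\in\mathbb{H}^1$, $r>0$, and $\epsilon\in(0,1)$:
\[
A\cap B_E(p,r)\setminus V(p)(r^{1+\epsilon})\subset A\cap\bigl(B_H(p,c_R r^{1/2})\setminus B_H(p,r^{(1+\epsilon)/2})\bigr).
\]
For $q=(x,y,t)$ in the left-hand side and $p=(x_0,y_0,t_0)$, (\ref{eq2}) gives $d_H(p,q)\le c_R r^{1/2}$, while (\ref{eq6}) turns $d_E(q,V(p))>r^{1+\epsilon}$ into $|t-t_0-2(x_0 y-xy_0)|>r^{1+\epsilon}$, and inserting this in the defining formula for $d_H$ forces $d_H(p,q)^4>r^{2+2\epsilon}$. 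Combining with (\ref{eqcomp}) then gives
\[
\mathcal{H}^s_E\bigl(A\cap B_E(p,r)\setminus V(p)(r^{1+\epsilon})\bigr)\le C_R\,\mathcal{H}^s_H\bigl(A\cap B_H(p,c_R r^{1/2})\setminus B_H(p,r^{(1+\epsilon)/2})\bigr).
\]

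The remaining step, and the main obstacle, is to choose for each typical $p$ some $\epsilon=\epsilon(p)\in(0,1)$ and a sequence $r_n\to 0$ along which the right-hand side above is $o(r_n^s)$. The crude upper-density estimate $\mathcal{H}^s_H(B_H(p,c_R r^{1/2}))\le C r^{s/2}$ is much larger than $r^s$, so extra structure is needed. The idea I would pursue is to exploit oscillation of the Heisenberg density: for $\mathcal{H}^s_H$-a.e.\ $p$ one looks for scales $\rho_n\to 0$ along which almost all of the mass of $A\cap B_H(p,\rho_n)$ is already captured by a sub-ball $B_H(p,\sigma_n)$ with $\sigma_n/\rho_n\to 0$, making the annular difference $o(\rho_n^{2s})$; one then sets $r_n=(\rho_n/c_R)^2$ and defines $\epsilon_n$ by $\sigma_n=r_n^{(1+\epsilon_n)/2}$, passing to a subsequence on which $\epsilon_n$ stabilises in $(0,1)$. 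Establishing the existence of such concentrating scales at $\mathcal{H}^s_E$-a.e.\ $p$ is the technical heart of the proof; it encodes the horizontality of $A$ forced by $\mathcal{H}^s_H(A)<\infty$ with $s\le 2$, and is most naturally attempted either by contradiction (measurable selection of uniform constants followed by a Vitali-type covering argument) or by a direct scale-by-scale construction from fine Heisenberg-ball covers of $A$ realising $\mathcal{H}^s_H(A)$.
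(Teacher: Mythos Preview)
Your geometric inclusion is correct, but it is far too lossy to carry the argument: the Heisenberg annulus $B_H(p,c_R r^{1/2})\setminus B_H(p,r^{(1+\epsilon)/2})$ contains much more of $A$ than the set $B_E(p,r)\setminus V(p)(r^{1+\epsilon})$, because it also picks up points that lie \emph{on} $V(p)$ but at Euclidean distance between $r$ and $r^{1/2}$ from $p$. Consequently the ``concentrating scales'' you hope for need not exist. Take $s=1$ and let $A$ be a horizontal segment, say $A=\{(x,0,0):0\le x\le 1\}$; then $d_H=d_E$ on $A$, so $A$ is $1$-Ahlfors regular in both metrics, and for every interior $p\in A$ and every $0<\sigma<\rho$ one has $\mathcal H^1_H(A\cap B_H(p,\rho)\setminus B_H(p,\sigma))=2(\rho-\sigma)$. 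With $\sigma/\rho\to 0$ this is comparable to $\rho$, which is never $o(\rho^{2})$. Thus your annulus bound only gives an upper estimate of order $r^{1/2}$ for $\mathcal H^1_E(A\cap B_E(p,r)\setminus V(p)(r^{1+\epsilon}))$, whereas the true value is $0$ since $A\subset V(p)$. The inclusion has discarded precisely the directional information---whether mass lies near or far from $V(p)$---that the theorem is about, so the proposed strategy cannot be completed.

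The paper proceeds quite differently. After a Radon--Nikodym reduction to the case where $\mathcal H^s_E$ and $\mathcal H^s_H$ are comparable on all subsets of $A$, it argues by contradiction: if the liminf were bounded below by some $c>0$ for every $\epsilon$ on a set $A'$ of positive measure, then at a typical $p\in A'$ one covers $A'\cap B_H(p,r)$ at each of the scales $r^{(1+\epsilon)^j}$, $j=1,\dots,k$, with $k\ge \log 2/(2\log(1+\epsilon))$, by Euclidean balls centred in $A'\cap B_H(p,r)$. The key geometric step is that the off-horizontal pieces $B_E(p_{j,i},r^{(1+\epsilon)^j})\setminus V(p_{j,i})(7r^{(1+\epsilon)^{j+1}})$ at different levels $j$ are pairwise disjoint, because any smaller-scale ball centred in $B_H(p,r)$ lies within the $6r^2$-neighbourhood of the horizontal plane of a nearby larger-scale centre. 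Summing the assumed lower bounds over $j$ produces at least $cc_1 k\, r^s$ Euclidean mass inside $B_E(p,3r)$, contradicting the Euclidean upper-density bound once $\epsilon$ is small enough to make $k$ large. This disjointness-at-nested-scales mechanism is the genuine substitute for your annulus idea, and it keeps track of exactly the horizontal/vertical distinction that your inclusion throws away.
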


\begin{proof} By the Borel regularity of Hausdorff measures we may assume that $A$ is a Borel set. Changing $\epsilon$ a bit it suffices to prove for $\mathcal{H}^s_E$ almost every $p \in A$ that
\begin{equation}\label{claim}
\liminf_{r \rightarrow 0} \frac{\mathcal{H}^s_E(A \cap B_E(p,r) \setminus V(p)(7r^{1+\epsilon}))}{r^s}=0.
\end{equation}
We may assume that for some positive number $R$,
\begin{equation*}
A \subset B_E(0,R).
\end{equation*}
First, let us see that we can reduce to the case when there is a positive number $C$ such that
\begin{equation}\label{red}
\frac{1}{C}\mathcal{H}^s_E(B) \le \mathcal{H}^s_H(B) \le C \mathcal{H}^s_E(B)
\end{equation}
for every $B  \subset A$. The left-hand side inequality holds because of (\ref{eq2}).  
We can decompose $A$ as
\begin{equation*}
A=C \cup D, 
\end{equation*}
where
\begin{equation}\label{AB}
\mathcal{H}^s_E(C)=0 \quad \mbox{and} \quad \mathcal{H}^s_E(B)=0 \Leftrightarrow \mathcal{H}^s_H(B)=0 \quad \forall B \subset D.
\end{equation}
This can be done as follows. Let $\mu_E= \mathcal{H}^s_E \mres {A}$ and $\mu_H= \mathcal{H}^s_H \mres A$. Since $\mu_E << \mu_H$, we have that for every Borel set $B \subset \mathbb{H}^1$,
\begin{equation*}
\mu_E(B)=\int_B D(\mu_E,\mu_H,x) d \mu_H x,
\end{equation*}
where $ D(\mu_E,\mu_H,x)$ is the Radon-Nikodym derivative of $\mu_E$ with respect to $\mu_H$. Thus if we let
\begin{equation*}
C= \{ x \in A: D(\mu_E,\mu_H,x)=0 \}, \quad D=\{x \in A: D(\mu_E,\mu_H,x)>0 \},
\end{equation*}
then $C$ and $D$ satisfy \eqref{AB}. Moreover, we can write 
\begin{equation*}
D= \bigcup_{j=1}^\infty D_j, \quad \mbox{where} \quad \mathcal{H}^s_E(B) \ge \frac{1}{j}\mathcal{H}^s_H(B) \quad \forall B \subset D_j
\end{equation*}
by taking
\begin{equation*}
D_j= \{ x \in D:  D(\mu_E,\mu_H,x)> \frac{1}{j} \}.
\end{equation*}
Thus  \eqref{red} holds for every $D_j$ in place of $A$. If we can prove \eqref{claim} under the assumption \eqref{red}, and so for every $D_j$, it follows that \eqref{claim} holds for $A$ by the the second part of the upper density theorem \ref{upperdens}. Hence we can assume \eqref{red}. 

Let $\epsilon > 0$. Suppose that \eqref{claim} is false. Then by Theorem \ref{upperdens} there exist $c>0$, $0<r_0<1$ and $A' \subset A$, $\mathcal{H}^s_E(A')>0$, such that
\begin{equation}\label{finuppdens}
\mathcal{H}^s_E(A\cap B_E(p,r)) \le 3^sr^s
\end{equation}
and
\begin{equation}\label{ca}
\mathcal{H}^s_E(A \cap B_E(p,r) \setminus V(p)(7 r^{1+\epsilon})) > c r^s
\end{equation}
for every $p \in A'$ and $0<r<r_0$.
Let $0<r<r_0/5$ and $p \in A'$ be such that $r^2<< r^{1+\epsilon}$ and 
\begin{equation}\label{posuppdens}
\mathcal{H}^s_E(B_H(p,r) \cap A') \ge \frac{1}{C} \mathcal{H}^s_H(B_H(p,r) \cap A')   > c'r^s
\end{equation}
with $c'=1/(2C)$ (we can find these by Theorem \ref{upperdens}). Let $k \in \mathbb{N}$ be such that $r^2< r^{(1+\epsilon)^k}$ and $r^2 \ge r^{(1+\epsilon)^{k+1}}$, whence $k \geq \log 2/(2\log(1+\epsilon))$. By the $5r$ covering theorem, see, e.g., Theorem 2.1 in \cite{M}, for $j=1, \dots, k$ we can find $p_{j,i} \in A' \cap B_H(p,r)$ such that
\begin{equation}\label{cover}
A'  \cap B_H(p,r)= \bigcup_{i=1}^{m_j} A'  \cap B_H(p,r) \cap B_E(p_{j,i} ,5  r^{(1+ \epsilon)^j}),
\end{equation}
where the balls $B_E(p_{j,i}, r^{(1+\epsilon)^j})$, $i=1, \dots, m_j$, are disjoint. Since by \eqref{posuppdens}, \eqref{cover} and \eqref{finuppdens}, 
\begin{align*}
c'r^s &< \mathcal{H}^s_E(B_H(p,r) \cap A') \\ &\le \sum_{i=1}^{m_j} \mathcal H^s_E(A'  \cap B_H(p,r) \cap B_E(p_{j,i} ,5 r^{(1+\epsilon)^j}))\\
&\le m_j 3^s 5^s r^{s(1+\epsilon)^j },
\end{align*}
we obtain 
\begin{equation}\label{mj}
m_j \geq c_1r^{s(1-(1+\epsilon)^j)},
\end{equation}
with $c_1=c'/15^s$ depending only on $s$ and $C$.

We can show that the sets
\begin{equation}\label{disj}
\bigcup_{i=1}^{m_j} B_E(p_{j,i},  r^{(1+\epsilon)^j}) \setminus V(p_{j,i})(7 r^{(1+\epsilon)^{j+1}}), \ j=1, \dots,k,
\end{equation}
are disjoint.
Let $j \in \{1, \dots, k-1\}$ and let $B_E(p_{j,i}, r^{(1+\epsilon)^j})$ and $B_E(p_{n,l}, r^{(1+\epsilon)^n})$, $ j +1 \le n \le k$, $i\in \{1, \dots, m_j\}, l \in \{1, \dots, m_n\}$, be such that $B_E(p_{j,i},r^{(1+\epsilon)^j} ) \cap B_E(p_{n,l}, r^{(1+\epsilon)^n}) \neq \emptyset$. We want to show that
\begin{equation}\label{intB}
( B_E(p_{j,i}, r^{(1+\epsilon)^j}) \setminus V(p_{j,i})(7r^{(1+\epsilon)^{j+1}})) \cap B_E(p_{n,l}, r^{(1+\epsilon)^n})= \emptyset.
\end{equation}
Let us denote $p=(\bar{x}, \bar{y}, \bar{t})$, $p_{j,i}=(x_i,y_i,t_i)$, $p_{n,l}=(x_l,y_l,t_l)$. Since $p_{j,i}, p_{n,l} \in B_H(p,r)$, we have
\begin{eqnarray}\label{BH}
 ((\bar{x}-x_i)^2+(\bar{y}-y_i)^2)^2+(\bar{t}-t_i-2(x_i \bar{y}- y_i \bar{x}))^2 \le r^4,
\end{eqnarray}
and 
\begin{eqnarray}\label{BH1}
((\bar{x}-x_l)^2+(\bar{y}-y_l)^2)^2+(\bar{t}-t_l-2(x_l \bar{y}- y_l \bar{x}))^2 \le r^4.
\end{eqnarray}
Moreover, 
\begin{equation}\label{eq3}
d_E(p_{j,i},p_{n,l}) \le r^{(1+\epsilon)^j}+ r^{(1+\epsilon)^n} \le 2 r^{(1+\epsilon)^j}.
\end{equation} 
We now want to show that $d_E(p_{n,l}, V(p_{j,i}) )\le 6 r^{(1+\epsilon)^{j+1}}$. Indeed by \eqref{eq6}, \eqref{BH}, \eqref{BH1} and \eqref{eq3} we have
\begin{align*}
d_E(p_{n,l}, V(p_{j,i}))&= \frac{|t_i-t_l-2(x_l y_i-y_l x_i)|}{\sqrt{1+4(y_i^2+x_i^2)}} \\
 \le& |t_i-t_l-2(x_l y_i-y_l x_i)|\\
 \le& |\bar{t}-t_l-2(x_l \bar{y}-y_l \bar{x})|+|2(y_l \bar{x}-x_l \bar{y})+2(x_i \bar{y}- y_i \bar{x})+2(x_l y_i-y_l x_i)|\\&+|t_i-\bar{t}+2(x_i \bar{y}- y_i \bar{x})|\\
\le & 2r^2+ 2 |(x_l-x_i)(y_i-\bar{y})-(y_l-y_i) (x_i-\bar{x})|\\
=& 2r^2+2| \langle (x_l-x_i,y_i-y_l),(y_i-\bar{y}, x_i- \bar{x}) \rangle |\\
\le & 2r^2+4 r^{(1+\epsilon)^j} r \le 6r^2,
\end{align*}
where $\langle \cdot, \cdot \rangle$ denotes the scalar product and we used Cauchy-Schwarz inequality.
Since $r^2< r^{(1+\epsilon)^k} \le r^{(1+\epsilon)^{j+1}}$, we have $d_E(p_{n,l}, V(p_{j,i})) \le 6 r^{(1+\epsilon)^{j+1}}$. Thus
\begin{equation*}
B_E(p_{n,l}, r^{(1+\epsilon)^n}) \subset V(p_{j,i})(7 r^{(1+\epsilon)^{j+1}}),
\end{equation*}
which implies \eqref{intB}. Hence the sets in \eqref{disj} are disjoint.

We have $\mathcal{H}^s_E(A \cap B_E(p_{j,i},  r^{(1+\epsilon)^j}) \setminus V(p_{j,i})(7  r^{(1+\epsilon)^{j+1}})) > c  r^{s(1+\epsilon)^j}$ by \eqref{ca} hence using \eqref{mj} we get
\begin{eqnarray*}
\begin{split}
\mathcal{H}^s_E(A \cap B_E(p,3r))& \geq \sum_{j=1}^k \sum_{i=1}^{m_j}  \mathcal{H}^s_E(A \cap B_E(p_{j,i},  r^{(1+\epsilon)^j}) \setminus V(p_{j,i})(7  r^{(1+\epsilon)^{j+1}}))\\ &\geq c\sum_{j=1}^k m_j r^{s(1+\epsilon)^j} \geq cc_1\sum_{j=1}^k   r^{s(1-(1+\epsilon)^j)} r^{s(1+\epsilon)^j}\\
&=cc_1kr^s \geq cc_1\log 2/(2\log(1+\epsilon))r^s.
\end{split}
\end{eqnarray*}
When $\epsilon$ is small enough,  the last term is greater than $7^sr^s$. This yields a contradiction with Theorem \ref{upperdens}.
\end{proof}

\begin{rem}
The above proof shows that if $A\subset B_E(0,R)$ satisfies \eqref{red}, then we can choose $\epsilon$ depending only on $s, R$ and $C$.
\end{rem}

\begin{theorem}\label{sgreat1}
Let $s \ge 1$ and $A \subset \mathbb{H}^1$ be such that $\mathcal{H}^{s}_E(A) <\infty$. Then for $\mathcal{H}^{s+1}_H$ almost every $p \in A$ there exists $\delta >0$ such that
\begin{equation}\label{claim2}
\limsup_{r \rightarrow 0} \frac{\mathcal{H}^s_E (A \cap B_E(p,r) \setminus V(p)(\delta r))}{(2r)^s} > \frac{1}{2^{s+1}}.
\end{equation}
\end{theorem}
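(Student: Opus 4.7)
The plan is to argue by contradiction. Let $E = \{p \in A : \text{the conclusion of the theorem fails at } p\}$ and assume $\mathcal{H}^{s+1}_H(E) > 0$. Since $\mathcal{H}^{s+1}_H \ll \mathcal{H}^s_E$ on $A$ by \eqref{eqcomp}, I first repeat the Radon--Nikodym decomposition from the proof of Theorem \ref{thm1} (with $\mathcal{H}^{s+1}_H$ playing the role of $\mathcal{H}^s_H$) to reduce to the case where
\[
(1/C)\mathcal{H}^s_E(B) \le \mathcal{H}^{s+1}_H(B) \le C\mathcal{H}^s_E(B) \quad \text{for every Borel } B \subset A.
\]
Fix small $\delta, \eta > 0$ and write $E = \bigcup_n E_n$, where $E_n$ is the set of $p \in E$ satisfying $\mathcal{H}^s_E(A \cap B_E(p, r) \setminus V(p)(\delta r)) \le (2^{-(s+1)} + \eta)(2r)^s$ for all $r < 1/n$; at least one $E_n$ has $\mathcal{H}^{s+1}_H(E_n) > 0$.

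For $\mathcal{H}^{s+1}_H$-a.e.\ $p \in E_n$, the lower bound in the upper density theorem \ref{upperdens} applied to $\mathcal{H}^s_E\mres A$ produces a sequence $r_k \to 0$ with $\mathcal{H}^s_E(A \cap B_E(p, r_k)) \ge (1-\epsilon)r_k^s$. Subtracting the failure bound and using the comparability from the reduction, the horizontal subset $S_k := A \cap B_E(p, r_k) \cap V(p)(\delta r_k)$ satisfies
\[
\mathcal{H}^{s+1}_H(S_k) \ge (1/C)\mathcal{H}^s_E(S_k) \gtrsim r_k^s.
\]

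The main technical step is to produce a competing upper bound for $\mathcal{H}^{s+1}_H$ on the full slab $V(p)(\delta r_k) \cap B_E(p, r_k)$ via an efficient Heisenberg cover. From \eqref{eq5}, each ball $B_H(q, \sigma)$ contains $V(q)(\sigma^2/C_R) \cap B_E(q, \sigma/2)$. A computation modelled on the bilinear $2(a_1 b_2 - a_2 b_1)$ estimate in the proof of Theorem \ref{thm1} shows that for $q \in V(p) \cap B_E(p, r_k)$ the plane $V(q)$ deviates vertically from $V(p)$ by at most $O(r_k \tau)$ at 2D distance $\tau$ from $q$. Choosing $\sigma = C_0\sqrt{\delta r_k}$ with $C_0$ sufficiently large and $r_k$ small compared to $\delta$, the slab thickness $\delta r_k$ and the tilt error $r_k\sigma$ are both absorbed by $\sigma^2/C_R$, so a single Heisenberg ball at each grid point of a 2D lattice on $V(p)$ covers the full vertical column of the slab at its 2D footprint. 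A grid of $\sim r_k/\delta$ such centers covers the whole slab, and summing gives
\[
\mathcal{H}^{s+1}_H\bigl(V(p)(\delta r_k) \cap B_E(p, r_k)\bigr) \le C_1\, r_k^{(s+3)/2} \,\delta^{(s-1)/2}.
\]

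Comparing this upper bound with the lower bound for $\mathcal{H}^{s+1}_H(S_k)$ yields $r_k^{(s-3)/2} \lesssim \delta^{(s-1)/2}$. For $1 \le s < 3$ the left-hand side tends to $\infty$ as $r_k \to 0$, contradicting the fixed right-hand side. For $s = 3$ the inequality reads $1 \lesssim \delta$, which is contradicted by choosing $\delta$ small enough at the outset. The hard part of the argument will be the covering estimate: the Heisenberg scale $\sigma \sim \sqrt{\delta r_k}$ must be chosen just right so that the tilt error $r_k \sigma$, the slab thickness $\delta r_k$, and the ``Heisenberg vertical thickness'' $\sigma^2$ are all comparable, producing the favorable exponent $(s+3)/2 > s$ for $s < 3$ that drives the contradiction.
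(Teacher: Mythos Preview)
Your central covering estimate is where the argument breaks. The displayed inequality
\[
\mathcal{H}^{s+1}_H\bigl(V(p)(\delta r_k)\cap B_E(p,r_k)\bigr)\le C_1\,r_k^{(s+3)/2}\,\delta^{(s-1)/2}
\]
is simply false: the slab on the left has positive Lebesgue measure, so for $s+1<4$ its Heisenberg $(s+1)$-Hausdorff measure is infinite. A finite cover by Heisenberg balls of a fixed radius $\sigma$ only bounds the pre-measure $\mathcal{H}^{s+1}_{H,2\sigma}$ (equivalently the Hausdorff content), and since $\mathcal{H}^{s+1}_{H,\eta}\le\mathcal{H}^{s+1}_H$ this gives no usable upper bound to compare with your lower bound $\mathcal{H}^{s+1}_H(S_k)\gtrsim r_k^s$. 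Even if you intend the left-hand side to be $\mathcal{H}^{s+1}_H(A\cap\text{slab})$, ``summing'' diameters is not enough; you would need, on each covering ball $B_H(q,\sigma)$, the density inequality $\mathcal{H}^{s+1}_H(A\cap B_H(q,\sigma))\lesssim\sigma^{s+1}$ from Theorem \ref{upperdens}, and you never invoke it. There is also an internal tension in your parameters: you require $r_k$ small compared to $\delta$ so that the tilt error $r_k\sigma$ is absorbed by $\sigma^2$, but in that regime $\sigma=C_0\sqrt{\delta r_k}>r_k$, a single Heisenberg ball already contains the whole slab, your ball count $r_k/\delta$ is below $1$, and the exponent bookkeeping leading to $r_k^{(s-3)/2}\to\infty$ collapses.

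The paper's argument supplies exactly the missing ingredient, but with a different and simpler cover. It works at scale $r$, not $\sqrt{\delta r}$: one stacks $k\approx(\delta/r)\sqrt{1+4R^2}$ Heisenberg balls $B_H(p_i,2r)$ along the vertical line through $p$, and by \eqref{eq5} each such ball contains a horizontal slice of Euclidean radius $r$ and vertical thickness $\sim r^2$, so together they tile the $\delta r$-slab. Crucially, one then uses subadditivity together with the genuine \emph{measure} bound $\mathcal{H}^{s+1}_H(A\cap B_H(p_i,2r))\le 3^{s+1}(2r)^{s+1}$ from Theorem \ref{upperdens} (not a diameter sum) and the comparability \eqref{H12} to obtain
\[
\mathcal{H}^s_E\bigl(A\cap B_E(p,r)\cap V(p)(\delta r)\bigr)\le Ck\,3^{s+1}(2r)^{s+1}\lesssim \delta\,r^s<\tfrac12 r^s
\]
once $\delta$ is chosen small. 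This estimate is uniform in $r$, so \eqref{claim2} follows directly from the lower density bound in Theorem \ref{upperdens}, with no blow-up argument needed.
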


\begin{proof}
We may assume that $A$ is a Borel set and $A\subset B_E(0,R)$ for some $R>0$. 
We can again reduce to the case where there exists a constant $C>0$ such that for every $B \subset A$ we have
\begin{equation}\label{H12}
\frac{1}{C} \mathcal{H}^s_E(B) \le \mathcal{H}^{s+1}_H(B) \le C  \mathcal{H}^s_E(B).
\end{equation}
Indeed, this follows from a similar reasoning as was used to prove the right-hand side inequality in \eqref{red} since $\mathcal{H}^{s+1}_H <<\mathcal{H}^s_E$ holds always by \eqref{eqcomp} (when $s \ge 1$, $\beta_+(s)=s+1$).

By Theorem \ref{upperdens} for $\mathcal{H}^s_E$ almost all $p \in A$ there exists $0<r_p <1$ such that for every $0<r<r_p$
\begin{equation}\label{H}
 \mathcal{H}^s_E(A \cap B_E(p,r)) \le 3^sr^s
\end{equation}
and
\begin{equation}\label{H2B}
 \mathcal{H}^{s+1}_H(A \cap B_H(p,r)) \le 3^{s+1}r^{s+1}.
\end{equation}
For $j=1, 2, \dots$, let
\begin{equation*}
A_j=\{ p \in A: 2^{-j} \le r_p < 2^{-j+1} \}.
\end{equation*}
Then $\mathcal{H}^s_E (A \setminus \cup_{j=1}^\infty A_j)=0$.

Let $p \in A_l$ and $\mathcal{H}^s_E( A_l )>0$ for some $l$ and let 
\begin{equation}\label{deltap}
0<\delta < \frac{1}{2^{s+2}3^{s+2} C \sqrt{1+4R^2}},
\end{equation}
 where $C$ is as in \eqref{H12}. For every $0<r<2^{-l}, r<\delta,$ we want to show that there exist $p_1, \dots, p_k$, $k \approx (\delta/r) \sqrt{1+4R^2}$, such that
\begin{equation}\label{Vp}
B_E(p,r) \cap V(p)(\delta r) \subset \bigcup_{i=1}^k B_H(p_i, 2r).
\end{equation}
Let $p=(\bar{x}, \bar{y}, \bar{t})$. By \eqref{eq4} the horizontal plane $V(p)$ is the set of points $(x,y,t) \in \mathbb{H}^1$ such that
\begin{equation*}
\bar{t}-t-2(x \bar{y}- y \bar{x})=0.
\end{equation*}
Let $L(p)$ be the vertical line passing through $p$, that is $L(p)=\{(\bar{x}, \bar{y},t): t \in \mathbb{R} \}$. If $q=(\bar{x},\bar{y},t) \in L(p)$ and $d_E(q, V(p))\leq \delta r$ then $|t-\bar{t}| \le \delta r \sqrt{1+4R^2}$. Indeed, by \eqref{eq6}
\begin{align}\label{dEVpL}
\delta r \geq d_E(q, V(p))= \frac{|\bar{t}-t-2(\bar{x} \bar{y}- \bar{y} \bar{x})|}{\sqrt{1+4(\bar{x}^2+\bar{y}^2)}}= \frac{|\bar{t}-t|}{\sqrt{1+4(\bar{x}^2+\bar{y}^2)}}\ge \frac{|\bar{t}-t|}{\sqrt{1+4R^2}}.
\end{align}
Cover the interval $[\bar{t} - \delta r \sqrt{1+4R^2},\bar{t} + \delta r \sqrt{1+4R^2}]$ with intervals  $[t_i,t_{i+1}], i= 1, \dots, k$, with $t_{i+1}-t_i=r^2$ and 
\begin{equation}\label{k}
k\leq 3 (\delta/r)\sqrt{1+4 R^2}.
\end{equation} 
Let
\begin{equation*}
p_i=(\bar{x}, \bar{y}, t_i) \in L(p).
\end{equation*}
If $u \in L(p) \cap V(p)(\delta r)$ then there exists $i$ such that $d_E(u,p_i) \le r^2$ by \eqref{dEVpL}. 
To see that \eqref{Vp} holds, let $q=(x,y,t) \in B_E(p,r) \cap V(p)(\delta r)$ and let $q'$ be the point of intersection between the plane passing through $q$ parallel to $V(p)$ and the line $L(p)$. This means that
\begin{equation*}
q'=(\bar{x},\bar{y}, t') \quad \mbox{and} \quad t-t'-2((\bar{x}-x)\bar{y}-(\bar{y}-y)\bar{x})=0,
\end{equation*}
hence
\begin{equation*}
q'=(\bar{x}, \bar{y}, t-2(y \bar{x}-x \bar{y})).
\end{equation*}
Since $q' \in L(p) \cap V(p)(\delta r)$, there exists $j \in \{1, \dots, k\}$ such that 
\begin{equation}\label{q'pj}
d_E(q',p_j)= |t-2(y \bar{x}-x \bar{y})- t_j| \le r^2.
\end{equation}
Let us now see that $q \in B_H(p_j,2r)$, that is $d_H(q,p_j) \le 2r$. Indeed,
\begin{equation}\label{dHqpj}
d_H(q,p_j)^4= ((x-\bar{x})^2+(y-\bar{y})^2)^2+\left(t-t_j-2(\bar{x}y-\bar{y}x)\right)^2.
\end{equation}
Since $q \in B_E(p,r)$, we have
\begin{equation*}
(x-\bar{x})^2+(y-\bar{y})^2 \le r^2,
\end{equation*}
and by \eqref{q'pj} the second term in \eqref{dHqpj} is $\le r^4$. It follows that  $d_H(q,p_j) \le 2r$, which proves \eqref{Vp}.

Hence by \eqref{Vp}, \eqref{H12}, \eqref{H2B}, \eqref{k} and \eqref{deltap} we have that for every $0<r<2^{-l},$
\begin{align*}
\mathcal{H}^s_E( A \cap B_E(p,r) \cap V(p)(\delta r)) &\le \sum_{i=1}^k \mathcal{H}^s_E(A \cap B_H(p_i,2r)) \\
& \le C \sum_{i=1}^k \mathcal{H}^{s+1}_H(A \cap B_H(p_i,2r)) \\
& \le C  k 3^{s+1}2^{s+1} r^{s+1}\\
& \le  C  3^{s+2}2^{s+1}\frac{\delta}{r} \sqrt{1+4R^2} r^{s+1}  < \frac{1}{2}r^s.
\end{align*}
Thus for $\mathcal{H}^s_E$ almost every $p \in A$ there exists $\delta>0$ such that
\begin{equation*}
\limsup_{r\rightarrow 0} \frac{\mathcal{H}^s_E( A \cap B_E(p,r) \cap V(p)(\delta r)) }{(2r)^s}<\frac{1}{2^{s+1}},
\end{equation*}
which proves \eqref{claim2} by Theorem \ref{upperdens}.
\end{proof}

\begin{rem}
Again, the above proof shows that if $A\subset B_E(0,R)$ satisfies \eqref{H12}, then we can choose $\delta$ depending only on $s, R$ and $C$.
\end{rem}

\begin{theorem}\label{ssmall1}
Let $0 < s<1$ and $A \subset \mathbb{H}^1 $ be such that $\mathcal{H}^s_E(A)<\infty$. Then  for $\mathcal{H}^{2s}_H$ almost every $p \in A$ there exists $\delta>0$ such that
\begin{equation}\label{claim3}
\limsup_{r \rightarrow 0} \frac{\mathcal{H}^s_E (A \cap B_E(p,r) \setminus V(p)(\delta r))}{(2r)^s} > 0.
\end{equation}
\end{theorem}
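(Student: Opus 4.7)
My plan is to run the same general scheme as in the proof of Theorem \ref{sgreat1}, but to exploit a simplification unique to the range $s<1$: the slab $B_E(p,r)\cap V(p)(\delta r)$ is contained in a \emph{single} Heisenberg ball of radius comparable to $\sqrt{\delta r}$, so the multi-scale covering used in Theorem \ref{sgreat1} collapses to a one-ball estimate.

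First I would carry out the standard reductions, closely analogous to those in the proofs of Theorem \ref{thm1} and Theorem \ref{sgreat1}. Assuming $A$ is Borel and contained in some $B_E(0,R)$, one reduces to the case in which
\[
\frac{1}{C}\mathcal{H}^s_E(B)\le\mathcal{H}^{2s}_H(B)\le C\,\mathcal{H}^s_E(B)
\]
holds for every $B\subset A$ and some $C>0$. As in the proof of Theorem \ref{sgreat1}, the right-hand inequality is automatic from \eqref{eqcomp} because $\beta_+(s)=2s$ for $s<1$, while the left-hand inequality comes from a Radon--Nikodym decomposition of $\mu_E=\mathcal{H}^s_E\mres A$ with respect to $\mu_H=\mathcal{H}^{2s}_H\mres A$; the piece of $A$ on which $d\mu_H/d\mu_E$ vanishes is $\mathcal{H}^{2s}_H$-null and can be discarded in view of the $\mathcal{H}^{2s}_H$-almost-everywhere conclusion.

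The core geometric step is a direct application of \eqref{eq5}. Set $\kappa_R:=(2(1+4R^2))^{1/4}$ and $r':=\kappa_R\sqrt{\delta r}$, so that $r'^{\,2}/\sqrt{2(1+4R^2)}=\delta r$. For $p\in B_E(0,R)$ and $r\le\delta\sqrt{2(1+4R^2)}/4$ one has $r\le r'/2$, and therefore \eqref{eq5} yields
\[
B_E(p,r)\cap V(p)(\delta r)\ \subset\ V(p)\!\bigl(r'^{\,2}/\sqrt{2(1+4R^2)}\bigr)\cap B_E(p,r'/2)\ \subset\ B_H(p,r').
\]
By Theorem \ref{upperdens} applied to $\mathcal{H}^{2s}_H\mres A$, for $\mathcal{H}^{2s}_H$-a.e.\ $p\in A$ there is $r_p>0$ such that $\mathcal{H}^{2s}_H(A\cap B_H(p,\rho))\le 3^{2s}\rho^{2s}$ whenever $0<\rho<r_p$. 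Combined with the above comparability, this produces, for all sufficiently small $r$,
\[
\mathcal{H}^s_E\bigl(A\cap B_E(p,r)\cap V(p)(\delta r)\bigr)\ \le\ K\,\delta^s r^s,\qquad K:=C\cdot 3^{2s}\kappa_R^{2s}.
\]

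Finally, the lower density bound in Theorem \ref{upperdens} gives, at $\mathcal{H}^s_E$-a.e.\ $p\in A$ and hence at $\mathcal{H}^{2s}_H$-a.e.\ $p\in A$ (since $\mathcal{H}^{2s}_H\mres A\ll\mathcal{H}^s_E\mres A$), that $\limsup_{r\to 0}\mathcal{H}^s_E(A\cap B_E(p,r))/(2r)^s\ge 2^{-s}$. Choosing $\delta$ so small that $K\delta^s<2^{-1}$ and using the elementary inequality $\limsup(h_1-h_2)\ge\limsup h_1-\limsup h_2$ yields
\[
\limsup_{r\to 0}\frac{\mathcal{H}^s_E\bigl(A\cap B_E(p,r)\setminus V(p)(\delta r)\bigr)}{(2r)^s}\ \ge\ 2^{-s}-\frac{K\delta^s}{2^s}\ \ge\ 2^{-s-1}>0,
\]
which is in fact stronger than \eqref{claim3}. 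I do not anticipate a real obstacle: the whole simplification rests on the fact that $2s<2$, which makes the $\mathcal{H}^{2s}_H$-content of the thin slab bounded by a constant times $\delta^s r^s$ and hence, for small $\delta$, strictly smaller than the lower bound $2^{-s}(2r)^s$ on $\mathcal{H}^s_E(A\cap B_E(p,r))$. The only mildly delicate point is the threshold $r\le\delta\sqrt{2(1+4R^2)}/4$ needed to invoke \eqref{eq5}, which is harmless as $r\to 0$.
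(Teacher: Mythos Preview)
Your argument is correct. The reductions are the same as in the paper, and your geometric observation --- that for $s<1$ the slab $B_E(p,r)\cap V(p)(\delta r)$ is contained in a single Heisenberg ball $B_H(p,\kappa_R\sqrt{\delta r})$ --- is precisely the diameter bound the paper itself proves (their inequality \eqref{diamH}), just read through \eqref{eq5} rather than computed by hand. The density estimates and the final subtraction are fine; note that since you actually establish the uniform bound $h_2(r)\le K\delta^s/2^s$ for all small $r$, you do not even need the inequality $\limsup(h_1-h_2)\ge\limsup h_1-\limsup h_2$: the subtraction can be done along any sequence realising $\limsup h_1$.

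The route, however, is genuinely different from the paper's. The paper argues by contradiction at the level of the \emph{whole set}: assuming \eqref{claim3} fails on a set $A_0$ of positive measure, it covers $A_0$ by a Vitali family of Euclidean balls, splits each ball into the slab and its complement, estimates $\mathcal{H}^{2s}_{H,\eta'}$ of the slab via its Heisenberg diameter and of the complement via the counterassumption, and sums to obtain $\mathcal{H}^{2s}_H(A_0)\le c(C''\delta^s+C\epsilon)\mathcal{H}^s_E(A_0)$, a contradiction for small $\delta,\epsilon$. Your proof instead works \emph{pointwise}, exactly mirroring the scheme of Theorem \ref{sgreat1} but with the covering of the slab by $k\approx\delta/r$ Heisenberg balls collapsed to a single ball of radius $\sim\sqrt{\delta r}$. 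This is shorter, avoids Vitali and the approximating measures $\mathcal{H}^{2s}_{H,\eta'}$, yields the explicit lower bound $2^{-s-1}$ rather than merely ``$>0$'', and makes transparent that Theorems \ref{sgreat1} and \ref{ssmall1} are really the same phenomenon. I do not see any compensating advantage in the paper's global argument here.
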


\begin{proof}
We may assume that $A$ is a Borel set and $A\subset B_E(0,R)$ for some $R>0$. 
Since $ \mathcal{H}^{2s}_H << \mathcal{H}^s_E$ always holds (here $\beta_+(s)=2s$ because $s<1$), we can assume, as in the proof of Theorem \ref{sgreat1}, that there exists $C>0$ such that 
\begin{equation}\label{Hs2s}
\frac{1}{C} \mathcal{H}^s_E(B) \le \mathcal{H}^{2s}_H(B) \le C  \mathcal{H}^s_E(B)
\end{equation}
for every $B\subset A $.

Suppose that \eqref{claim3} does not hold. Let $A_0 \subset A$ be a Borel set such that $\mathcal{H}^s_E(A_0)>0$ and that \eqref{claim3} fails for $p\in A_0$ for every $\delta>0$. Fix $\delta>0$ and $\epsilon>0$, to be chosen sufficiently small at the end of the proof.  Then there exist a Borel set $A' \subset A_0$ and $r_0>0$ such that  $\mathcal{H}^s_E(A')>\mathcal{H}^s_E(A_0)/2$ and for every $p \in A'$ and for every $0<r<r_0$, 
\begin{equation}\label{counterass}
\mathcal{H}^s_E (A \cap B_E(p,r) \setminus V(p)(\delta r)) <\epsilon r^s.
\end{equation}
Let $0<\eta<\min \{r_0,\delta\}$. Let $c=3^s$. Then by Theorem \ref{upperdens}  for $\mathcal{H}^s_E$ almost all $p \in A'$ there is $r_p<\eta$ such that 
\begin{equation}\label{rs}
\frac{r_p^s}{c} \le \mathcal{H}^s_E(A' \cap B_E(p,r_p)) \le c r_p^s.
\end{equation} 
Applying Vitali's covering theorem (see Theorem 2.8 in \cite{M}) to the family of balls $\{ B_E(p,r_p): p \in A' \ \mbox{such that } r_p \mbox{ exists}\}$, we find a subfamily of disjoint balls, $\{B_E(p_i,r_i)\}_{i=1}^\infty$, such that
\begin{equation}
\mathcal{H}^s_E\left( A' \setminus \bigcup_{i=1}^\infty B_E(p_i,r_i) \right)=0.
\end{equation}
Hence we have by \eqref{rs}
\begin{align}\label{HsEAR}
\mathcal{H}^s_E(A') = \mathcal{H}^s_E\left(A'\cap \bigcup_{i=1}^\infty B_E(p_i,r_i)\right)=\sum_{i=1}^\infty  \mathcal{H}^s_E(A' \cap  B_E(p_i,r_i)) \ge  \frac{1}{c} \sum_{i=1}^\infty r_i^s.
\end{align}
Since $p_i \in B_E(0,R)$, we have by \eqref{eq2} that $\mbox{diam}_H(B_E(p_i,r_i))\le c_R \mbox{diam}_E(B_E(p_i,r_i))^{1/2}\le c_R \sqrt{2\eta}$. Let $\eta'= c_R \sqrt{2\eta}$. Then we have
\begin{align}\label{H2sA}
\mathcal{H}^{2s}_{H,\eta'} (A')  \le&   \sum_{i=1}^\infty  \mathcal{H}^{2s}_{H,\eta'}(A' \cap  B_E(p_i,r_i)) \nonumber \\
\le &  \sum_{i=1}^\infty  \mathcal{H}^{2s}_{H,\eta'}(A' \cap  B_E(p_i,r_i) \cap V(p_i)(\delta r_i))\\ &+\sum_{i=1}^\infty  \mathcal{H}^{2s}_{H,\eta'}(A' \cap  B_E(p_i,r_i)\setminus V(p_i)(\delta r_i)).\nonumber
\end{align}
Moreover, we have
\begin{align}\label{diamH}
 \mathcal{H}^{2s}_{H,\eta'}(A'\cap  B_E(p_i,r_i) \cap V(p_i)(\delta r_i)) &\le \mbox{diam}_H(B_E(p_i,r_i) \cap V(p_i)(\delta r_i))^{2s} \nonumber \\ & \le (2( c_R+1) \sqrt{\delta r_i})^{2s}= C'' (\delta r_i)^s. 
\end{align}
To see this, let $q, q' \in B_E(p_i,r_i) \cap V(p_i)(\delta r_i)$ and let $\bar{q}, \bar{q}' \in B_E(p_i,r_i) \cap V(p_i)$ be such that $d_E(q,\bar{q}) \le \delta r_i$ and $d_E(q',\bar{q}') \le \delta r_i$. Then
\begin{align*}
d_H(q,q') \le d_H(q,\bar{q})+d_H(\bar{q},\bar{q}')+d_H(\bar{q}',q').
\end{align*}
We have $ d_H(q,\bar{q}) \le c_Rd_E(q,\bar{q})^{1/2} \le c_R(\delta r_i)^{1/2}$, $ d_H(q',\bar{q}') \le c_R d_E(q',\bar{q}')^{1/2} \le c_R(\delta r_i)^{1/2}$ by \eqref{eq2} and
\begin{equation*}
d_H(\bar{q},\bar{q}') \le d_H(\bar{q},p_i)+d_H(p_i, \bar{q}') = d_E(\bar{q},p_i)+d_E(p_i, \bar{q}') \le 2r_i,
\end{equation*}
where we used the fact that $d_H(u,p_i)=d_E(u,p_i) $ if $u \in V(p_i)$.
Since $r_i < \eta < \delta$, it follows that $r_i \le (\delta r_i)^{1/2}$, hence
\begin{equation*}
d_H(q,q') \le 2 c_R (\delta r_i)^{1/2} +2 r_i \le 2(c_R+1) (\delta r_i)^{1/2},
\end{equation*}
which proves \eqref{diamH}.
On the other hand, by \eqref{Hs2s} and \eqref{counterass} we have
\begin{align*}
\mathcal{H}^{2s}_{H}(A' \cap  B_E(p_i,r_i)\setminus V(p_i)(\delta r_i)) \le C \mathcal{H}^{s}_{E}(A' \cap  B_E(p_i,r_i)\setminus V(p_i)(\delta r_i))<C \epsilon r_i^s,
\end{align*}
thus also
\begin{equation}\label{minus}
\mathcal{H}^{2s}_{H,\eta'}(A' \cap  B_E(p_i,r_i)\setminus V(p_i)(\delta r_i))< C \epsilon r_i^s.
\end{equation}
Hence we have by \eqref{H2sA}, \eqref{diamH}, \eqref{minus} and \eqref{HsEAR}
\begin{align*}
\mathcal{H}^{2s}_{H,\eta'} (A') \le (C''\delta^s+C\epsilon) \sum_{i=1}^\infty r_i^s \le c ( C''\delta^s+C\epsilon) \mathcal{H}^s_E(A'),
\le 2c ( C''\delta^s+C\epsilon) \mathcal{H}^s_E(A_0).
\end{align*}
whence letting $\eta$ and $\eta'$ tend to $0$, 
\begin{align*}
0<\mathcal{H}^{2s}_{H} (A_0) < 2\mathcal{H}^{2s}_{H} (A') \le 2c ( C''\delta^s+C\epsilon) \mathcal{H}^s_E(A_0).
\end{align*}
Since $\delta$ and $\epsilon$ are allowed to depend on $A_0$ and they can be chosen arbitrarily small, we have a contradiction which completes the proof.
\end{proof}

\section{Examples}

We show the sharpness of Theorem \ref{thm1} with three examples.
Example \ref{ex1} shows that we cannot replace $\liminf$ by $\limsup$, Example \ref{ex2} shows that we cannot replace the $r^{1+\epsilon}$-neighbourhood by $Mr^{2}$-neighbourhood for any positive number $M$, in particular we cannot replace it with the Heisenberg ball $B_H(p,r)$. We shall construct these two examples only for $s=1$, but very likely similar examples can be given for any $0<s<2$.  Example \ref{ex3} shows that when $s>2$ then in arbitrarily small neighbourhoods around a point $p$ the set cannot lie too close to the horizontal plane through $p$, in the sense that we cannot obtain the same conclusion as in Theorem \ref{thm1}.

\begin{ex}\label{ex1}
There exists a compact set $F \subset \mathbb{H}^1$ such that for some positive constant $C$, $\mathcal{H}^1_H(F)>0$ and $\mathcal{H}^1_H(A) \leq  C\mathcal{H}^1_E(A)<\infty$ for $A\subset F$, and for $p \in F$, 
\begin{equation}\label{claim4} 
\limsup_{r \rightarrow 0} \frac{\mathcal{H}^1_E (F \cap B_E(p,r) \setminus V(p)(r/8))}{2r} \geq \frac{1}{8}.
\end{equation}
\end{ex}

\begin{ex}\label{ex2}
For any $M, 1<M<\infty$, there exists a compact set $F \subset \mathbb{H}^1$  such that for some positive constant $C$, $\mathcal{H}^1_H(F)>0$ and $\mathcal{H}^1_H(A) \leq  C\mathcal{H}^1_E(A)<\infty$ for $A\subset F$, and for $p \in F$, 
\begin{equation}\label{claim5}
\liminf_{r \rightarrow 0} \frac{\mathcal{H}^1_E (F \cap B_E(p,r) \setminus V(p)(Mr^2))}{2r} \geq \frac{1}{16}.
\end{equation}
\end{ex}

Both examples will follow from the same construction which we now describe. In both cases $F$ will be a subset of the vertical plane $V=\{(x,y,t): y=0\}$, whose points will now be written as $(x,t)$. The metric $d_H$ restricted to this plane is given by
$$d_H((x_1,t_1),(x_2,t_2))=\left((x_1-x_2)^4+(t_1-t_2)^2\right)^{1/4}.$$  For $p=(x,t)\in V$, the horizontal plane $V(p)$ intersects $V$ along the line $\{(u,t): u\in\R\}$. 

For $p,q\in V$ we have $d_E(p,q)\leq d_H(p,q)$ if $d_E(p,q)\leq 1/2$. Thus
$$\mathcal{H}^1_E(B)\leq \mathcal{H}^1_H(B)\ \text{for}\ B\subset V.$$

Let $n$ be an integer, $n\geq 1$, and $\lambda$ a positive number, $0<\lambda \le1/2$. For a rectangle $R=[a,b]\times [c,d]\subset V$ we let $\mathcal R(R,n,\lambda)$ be the collection of the following $2n$ subrectangles:
\begin{align*}
&[a+2i\frac{b-a}{2n},a+2i\frac{b-a}{2n}+\frac{b-a}{2n}]\times [c,c+\lambda (b-a)]\ \text{for}\ i=0,\dots,n-1,\\
&[a+(2i+1)\frac{b-a}{2n},a+(2i+1)\frac{b-a}{2n}+\frac{b-a}{2n}]\times [d-\lambda (b-a),d]\ \text{for}\ i=0,\dots,n-1.
\end{align*}

Let $(n_k)$ be a sequence of integers, $n_k\geq 1$, and $(\lambda_k)$ a sequence of positive numbers, $\lambda_k\leq 1/2$. We define for $k\geq 1$, 

$$\mathcal R_0 = \mathcal R([0,1]^2,1,1/2),$$
$$\mathcal R_{k} = \bigcup_{R\in\mathcal R_{k-1}}\mathcal R(R,n_k,\lambda_k),$$
and 
$$F = \bigcap_{k=0}^{\infty}\bigcup_{R\in\mathcal R_k}R.$$

Then $F\subset V$ is compact and the projection of $F$ on the $x$-axis is $[0,1]$. Thus both $\mathcal{H}^1_E(F)$ and $\mathcal{H}^1_H(F)$ are at least $1$. Using the natural coverings with the rectangles of $\mathcal R_k$, one easily checks that they also are finite provided $\lambda_k$ goes to $0$ sufficiently fast. More precisely, let $h_k$ be the length of the horizontal sides of the rectangles of $\mathcal R_k$ and let $v_k$ be the length of their vertical sides. Then the Euclidean diameter of each $R\in \mathcal R_k$ is $(h_k^2+v_k^2)^{1/2}$ and the Heisenberg diameter is $(h_k^4+v_k^2)^{1/4}$. If $v_k/h_k$ tends to zero as $k\to\infty$, then 
\begin{equation}\label{eq7}
\mathcal{H}^1_E(F\cap R) = h_k\ \text{for}\ R\in\mathcal R_k,
\end{equation}
in particular, $\mathcal{H}^1_E(F)=1$. If moreover, $v_k \le Ch_k^2$ for all large enough $k$, then 
\begin{equation}\label{eq1}
\mathcal{H}^1_H(A)\leq (1+C^2)^{1/4}\mathcal{H}^1_E(A)\ \text{for}\ A\subset F.
\end{equation}

These conditions on $h_k$ and $v_k$  will be satisfied in both examples below; in Example \ref{ex1} $v_k = h_k^2$ and in Example \ref{ex2} $v_k = 34M h_k^2$ for large $k$.

For Example \ref{ex1} we choose $n_k = 2^{^{2^{k-1}-1}}$ and $\lambda_k = 2^{-3\cdot 2^{k-1}}$. As a consequence, the rectangles in $\mathcal R_k$ have horizontal sides of length $h_k=2^{^{-2^k}}$ and the vertical sides of length $2^{^{-2^{k+1}}}$. For $R\in\mathcal R_k$, the horizontal sides of each rectangle $R'$ of $\mathcal R_{k+1}$ inside $R$ thus has the same length as the vertical sides of $R$ (see Figure \ref{fig ex1}). This implies that for $p\in R'$ and $r_k=4h_{k+1}$, 
$B_E(p,r_k) \setminus V(p)(r_k/8)$ contains another rectangle $R''$ of $\mathcal R_{k+1}$. Hence
$$\frac{\mathcal{H}^1_E (F \cap B_E(p,r_k) \setminus V(p)(r_k/8))}{2r_k} \geq \frac{\mathcal{H}^1_E (F \cap R'')}{8h_{k+1}} = \frac{1}{8},$$
from which, recalling also (\ref{eq1}), the asserted properties follow.

\begin{figure}[H]
\begin{center}
\includegraphics[scale=0.3]{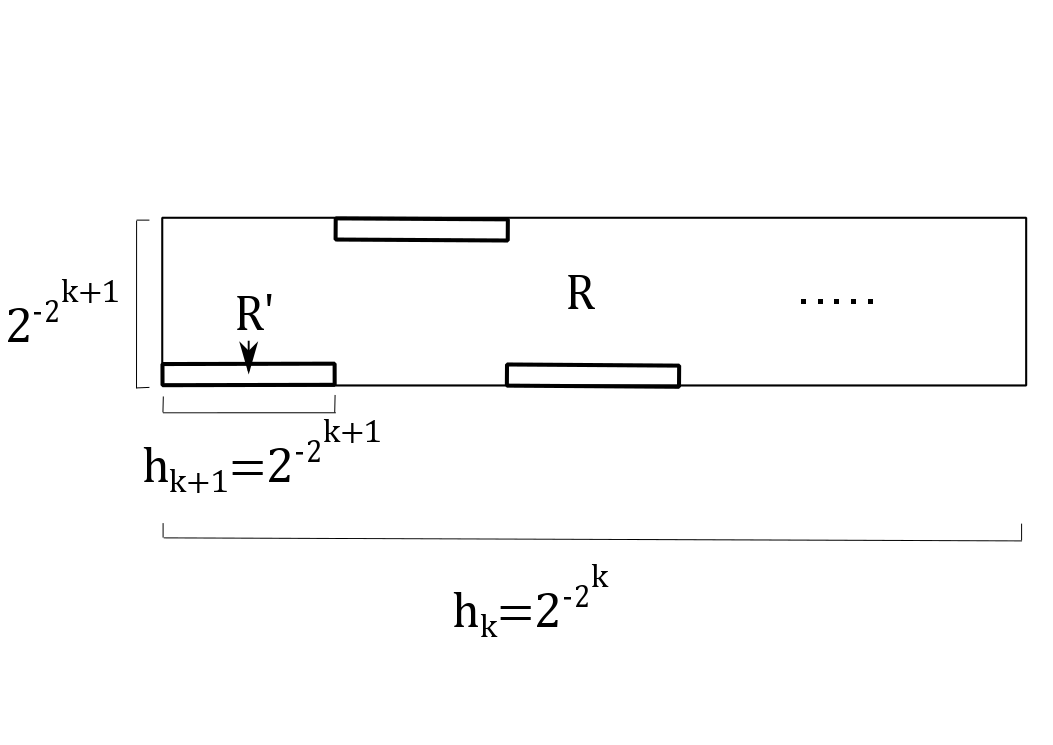}
\caption{A rectangle $R \in \mathcal R_k$ and a rectangle $R' \in \mathcal R_{k+1}$ inside $R$ in Example \ref{ex1}}
\label{fig ex1}
\end{center}
\end{figure}

For Example \ref{ex2} we choose $n_k = 1$ and we let $\lambda_k =1/2$, when $34M4^{-k}\geq 2^{-k}$, that is, $2^k\leq 34M$, and 
$\lambda_k=34M2^{-k-1}$ for all larger $k$. As a consequence, for large enough $k$, the rectangles in $\mathcal R_k$ have horizontal sides of length $h_k=2^{-k}$ and vertical sides of length $34M4^{-k}$. For $R\in\mathcal R_k$, we have two rectangles $R_1$ and $R_2$ of $\mathcal R_{k+1}$ inside $R$, one along the lower side of $R$ and one along the upper. The distance between these rectangles is $34M4^{-k}-2\cdot 34M4^{-k-1}=17M4^{-k}$ (see Figure \ref{fig ex2}).

Let $0<r<1$ and let $k$ be such that $2^{1-k}\leq r < 2^{2-k}$. We assume that $r$ is small enough so that $2^k > 68M$. Let $R, R_1$ and $R_2$ be as above and $p\in R_2$. Then 
$Mr^2 \leq M4^{2-k}< 17M4^{-k}$, whence $R_1$ lies outside $V(p)(Mr^2)$. On the other hand, as $2^{1-k}\leq r$, $R_1$ lies inside $B_E(p,r)$. This implies that 
$B_E(p,r) \setminus V(p)(Mr^2)$ contains $R_1$, from which the asserted properties follow as in the case of Example \ref{ex1}.
\begin{figure}[H]
\begin{center}
\includegraphics[scale=0.25]{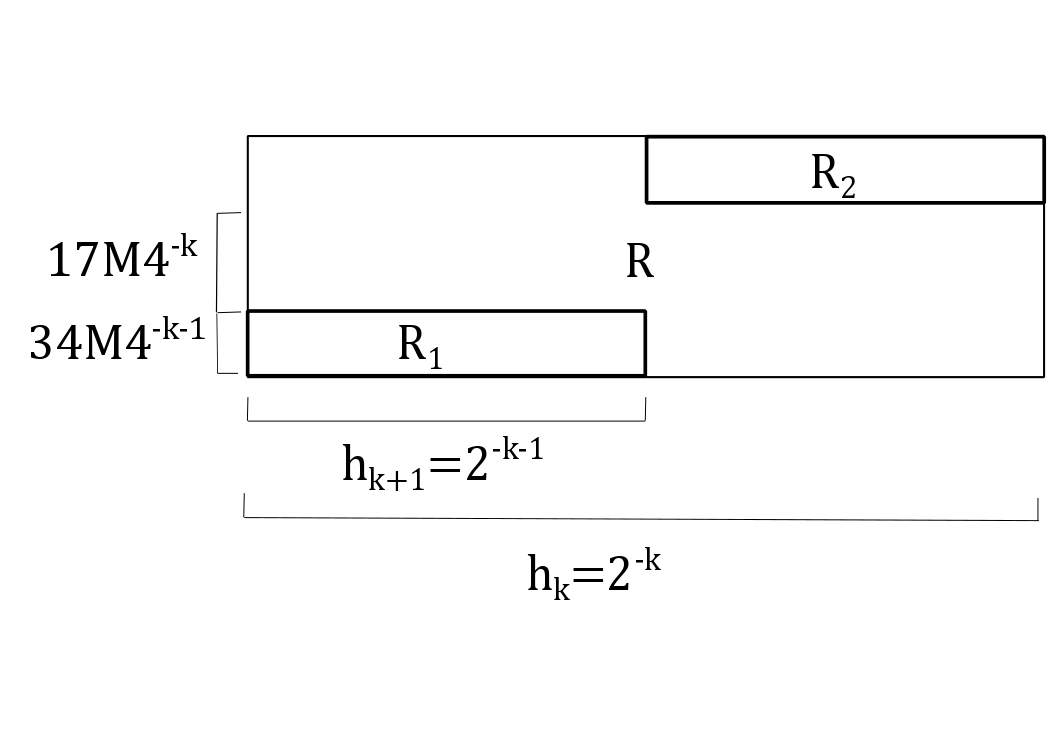}
\caption{A rectangle $R \in \mathcal R_k$ and the rectangles $R_1,R_2 \in \mathcal R_{k+1}$ inside $R$ in Example \ref{ex2}}
\label{fig ex2}
\end{center}
\end{figure}

The next example shows that the conclusion of Theorem \ref{thm1} fails when $2<s<3$.

\begin{ex}\label{ex3}
For any $2<s<3$ there exist constants $c_s, \delta_s>0$ and a set $F_s \subset \mathbb{H}^1$ such that $\mathcal{H}^s_E(F_s)>0$, $\mathcal{H}^{2s-2}_H(F_s)<\infty$ and for $\mathcal{H}^s_E$ almost every $p \in F_s$,
\begin{equation}\label{claimex3}
\liminf_{r\rightarrow 0} \frac{\mathcal{H}^s_E(F_s \cap B_E(p,r)\setminus V(p)( \delta_sr))}{r^s}\ge c_s.
\end{equation}
\end{ex}

This example is taken from Theorem 4.1 in \cite{BT}, where it is used to show the sharpness of some of the dimension inequalities. We will consider the Heisenberg square $Q_H $ and a certain Cantor set above each point of $Q_H$. The Heisenberg square is the invariant set of the affine iterated function system $F_1, F_2,F_3,F_4$, that is $Q_H=\cup_{j=1}^4F_j(Q_H)$. The maps $F_i: \mathbb{H}^1 \rightarrow \mathbb{H}^1$, $i=1,2,3,4$, are similarities with respect to $d_H$ with contraction ratio $1/2$ and they are horizontal lifts of $f_j$, $j=1,2,3, 4$, which are maps in the plane. This means that $\pi \circ F_j=f_j \circ \pi$, where $\pi: \mathbb{R}^3 \rightarrow \mathbb{R}^2$ is the projection $\pi(x,y,t)=(x,y)$. These maps have the form $f_j(x,y)=\frac{1}{2}((x,y)+v_j)$, where $v_1=(0,0)$, $v_2=(1,0)$, $v_3=(0,1)$ and $v_4=(1,1)$. Then we have $\pi(Q_H)=Q=[0,1]^2$, where $Q$ is the invariant set of the iterated function system $f_1, f_2,f_3,f_4$. See \cite{BHT} and  \cite{BT} for more details. We will use the symbolic dynamics notation: for $m \ge 1$ and $w=w_1 w_2 \dots w_m \in W_m= \{1,2,3,4\}^m$ we let $F_w=F_{w_1}\circ \dots \circ F_{w_m}$. Then $Q_H=\cup_{w \in W_m}F_w(Q_H)$ for every $m$. 

Given $2<s<3$, let $d=s-2$ and let $C_d$ be a standard symmetric Cantor set in the $t$-axis such that $0<\mathcal{H}^d_E(C_d)< \infty$. Then $0< \mathcal{H}^{2d}_H(C_d)<\infty$. Moreover, $C_d$ is $d$-Ahlfors regular, which implies that for $0<r<1$ and $(0,0,t') \in C_d$, 
\begin{equation}\label{HdC}
\mathcal{H}^d_E(\{(0,0,t) \in C_d: c_0r \le |t-t'| \le \frac{r}{4}\}) \ge c_d r^d
\end{equation}
for some constants $c_0$ and $c_d$. The set $C_d$  is the invariant set associated to two maps $G_1,G_2$, which are $2^{-1/2d}$-Lipschitz with respect to $d_H$. Let
\begin{equation*}
F_s=\{ (x,y, t+t'): (x,y,t) \in Q_H, (0,0,t') \in C_d\}.
\end{equation*}
It is shown in Theorem 4.1 in \cite{BT} that $\mathcal{H}^s_E(F_s)>0$ and $\mathcal{H}^{2s-2}_H(F		_s)<\infty$. 

Let $p=(\bar{x},\bar{y},\bar{t}+\bar{t}') \in F_s$ and let $0<r<\min\{1/20, c_0/6\}$. Let $m$ be the integer such that $2^{-m+2} \diam_H(Q_H)\le r < 2^{-m+3} \diam_H(Q_H)$, then $2^{-m+2} \diam_H(Q_H) < \min\{1/20, c_0/6\}$. Let $n$ be the smallest integer such that $n \ge 2dm$. For $w \in \{1,2,3,4\}^m$ and $v \in \{1,2\}^n$, let
\begin{equation*}
F^{vw}_s= \{ (x,y,t+t'): (x,y,t) \in F_w(Q_H), (0,0,t') \in G_v(C_d) \} \subset F_s. 
\end{equation*}
Then 
\begin{equation*}
\diam_H(F^{vw}_s ) \le 2^{-m+2} \diam_H(Q_H) \le r.
\end{equation*}
Indeed, if $\diam_H(F^{vw}_s )=d_H((x,y,t+t'),(\tilde{x},\tilde{y},\tilde{t}+\tilde{t}'))$, then we have, as shown in the proof of Theorem 4.1 in \cite{BT},
\begin{align*}
\diam_H(F^{vw}_s )^4&=((x-\tilde{x})^2+(y-\tilde{y})^2)^2+(t+t'-\tilde{t}-\tilde{t}'-2(\tilde{x}y-\tilde{y}x))^2\\
&\le 2 (((x-\tilde{x})^2+(y-\tilde{y})^2)^2+(t-\tilde{t}-2(\tilde{x}y-\tilde{y}x))^2+(t'-\tilde{t}')^2)\\
&\le 2 (2^{-4m} \diam_H(Q_H)^4+2^{-2n/d})\le  2^{-4m+2}\diam_H(Q_H)^4.
\end{align*}

Let $w$ and $v$ be such that $p \in F^{vw}_s$, so we have $F^{vw}_s \subset F_s\cap B_H(p,r)$. Let now $q=(x,y,t+t') \in F^{vw}_s$ be such that $|x-\bar{x}|^2+|y-\bar{y}|^2  \le  r^2/400$. Then $q \in B_H(p,r)$ and we have 
\begin{align}\label{tt'}
\nonumber |t+t'-\bar{t}- \bar{t}'|& \le |t+t'-\bar{t}-\bar{t}'-2(\bar{x}y-\bar{y}x)|+2|\bar{x}y-\bar{y}x|\\ \nonumber & \le d_H(p,q)^2+2|\bar{x}(y-\bar{y})+(\bar{x}-x)\bar{y}| \\ 
 & \le r^2+ \frac{4r}{20}\le \frac{r}{20} + \frac{4r}{20} = \frac{r}{4}.
\end{align}
Let 
\begin{equation}\label{Cdq}
C^q_d=\{(0,0,t'') \in C_d: c_0r \le |t''-t'| \le \frac{r}{4}\}.
\end{equation}
We want to show that the set
\begin{equation*}
L_q=\{(x,y,t+t''): (0,0,t'') \in C_d^q \} 
\end{equation*}
is contained in 
\begin{equation*}
D_r=F_s \cap B_E(p,r) \cap \{(x,y,t): |x-\bar{x}|^2+|y-\bar{y}|^2  \le r^2/400\} \setminus V(p)(c_0r/6).
\end{equation*}
Let $q''=(x,y,t+t'') \in L_q$. Then $q'' \in F_s$ since $(x,y,t) \in Q_H$ and $(0,0,t'') \in C_d$. Moreover, by \eqref{tt'} and \eqref{Cdq} we have
\begin{align*}
|t+t''-\bar{t}-\bar{t}'|\le |t+t'-\bar{t}-\bar{t}'|+|t''-t'| \le \frac{r}{4}+\frac{r}{4}=\frac{ r}{2},
\end{align*}
thus
\begin{align*}
|q''-p|^2=|x-\bar{x}|^2+|y-\bar{y}|^2+|t+t''-\bar{t}-\bar{t}'|^2\le \frac{r^2}{400}+\frac{r^2}{4} < r^2.
\end{align*}
This implies that $q'' \in B_E(p,r)$. It remains to show that $d_E(q'',V(p)) \ge c_0r/6$. Using \eqref{eq6}, \eqref{Cdq} and the facts that $\bar{x}^2+\bar{y}^2\le 2$ and $d_H(p,q) \le r < c_0/6$, we have
\begin{align*}
d_E(q'',V(p))&= \frac{|\bar{t}+\bar{t}'-t-t''-2(x\bar{y}-y\bar{x})|}{\sqrt{1+4(\bar{x}^2+\bar{y}^2})}\\
& \ge \frac{|t'-t''|}{\sqrt{1+4(\bar{x}^2+\bar{y}^2})}-\frac{|\bar{t}+\bar{t}'-t-t'-2(x\bar{y}-y\bar{x})|}{\sqrt{1+4(\bar{x}^2+\bar{y}^2})}\\
& \ge |t'-t''|/3-d_H(q,p)^2\ge c_0r/3-r^2 \ge c_0r/6.
\end{align*}
Hence we have
\begin{equation}\label{Dr}
L_q \subset D_r \subset F_s \cap B_E(p,r)\setminus V(p)(c_0r/6).
\end{equation}
In particular, for every point $(x,y,t) \in F_w(Q_H)$ such that $|x-\bar{x}|^2+|y-\bar{y}|^2\le r^2/400$ there are points $(x,y,t+t'') \in D_r$. Thus
\begin{align*}
\pi(F_s \cap B_E(p,r)\setminus V(p)(c_0r/6))& \supset\pi(D_r)\\&  \supset \pi(F_w(Q_H)) \cap \{(x,y): |x-\bar{x}|^2+|y-\bar{y}|^2\le r^2/400\} \\&=f_w(Q) \cap \{(x,y): |x-\bar{x}|^2+|y-\bar{y}|^2\le r^2/400\} ,
\end{align*}
which implies
\begin{equation}\label{H2pi}
\mathcal{H}^2_E(\pi(D_r)) \ge \mathcal{H}^2_E(f_w(Q)\cap \{(x,y): |x-\bar{x}|^2+|y-\bar{y}|^2\le r^2/400\}) \ge c r^2
\end{equation}
for some constant $c$.
Then by \eqref{Dr}, \eqref{HdC}, \eqref{H2pi} and Theorem 7.7 in \cite{M} we have for some constant $c'>0$, 
\begin{align*}
\mathcal{H}^s_E(F_s \cap B_E(p,r) \setminus V(p)(c_0r/6))&\ge \mathcal{H}^s_E(D_r)\\
& \ge c'\int_{\pi(D_r)} \mathcal{H}^d_E( \{ (0,0,t_q+t''): q=(x_q,y_q,t_q+t'') \in D_r\}) d \mathcal{H}^2_E(x_q,y_q)\\
& \ge c'\int_{\pi(D_r)} \mathcal{H}^d_E (C^q_d) d \mathcal{H}^2_E(x_q,y_q) \\
&\ge c'c_d r^d \mathcal{H}^2_E(\pi(D_r))
\ge c'c_d c r^{2+d}=c_s r^s,
\end{align*}
which implies \eqref{claimex3}.

\vspace{1cm}
\begin{footnotesize}
{\sc Department of Mathematics and Statistics,
P.O. Box 68,  FI-00014 University of Helsinki, Finland,}\\
\emph{E-mail addresses:} 
\verb"pertti.mattila@helsinki.fi", 
\verb"laura.venieri@helsinki.fi" 

\end{footnotesize}

\end{document}